\newtheorem{thm}[equation]{Theorem}
\numberwithin{equation}{section}
\newtheorem{lem}[equation]{Lemma}
\newtheorem{fig}[equation]{Figure}
\begin{document}
\raggedbottom \voffset=-.7truein \hoffset=0truein \vsize=8truein
\hsize=6truein \textheight=8truein \textwidth=6truein
\baselineskip=18truept

\def\mapright#1{\ \smash{\mathop{\longrightarrow}\limits^{#1}}\ }
\def\mapleft#1{\smash{\mathop{\longleftarrow}\limits^{#1}}}
\def\mapup#1{\Big\uparrow\rlap{$\vcenter {\hbox {$#1$}}$}}
\def\mapdown#1{\Big\downarrow\rlap{$\vcenter {\hbox {$\ssize{#1}$}}$}}
\def\mapne#1{\nearrow\rlap{$\vcenter {\hbox {$#1$}}$}}
\def\mapse#1{\searrow\rlap{$\vcenter {\hbox {$\ssize{#1}$}}$}}
\def\mapr#1{\smash{\mathop{\rightarrow}\limits^{#1}}}
\def\ss{\smallskip}
\def\vp{v_1^{-1}\pi}
\def\at{{\widetilde\alpha}}
\def\sm{\wedge}
\def\la{\langle}
\def\ra{\rangle}
\def\ev{\text{ev}}
\def\od{\text{od}}
\def\on{\operatorname}
\def\ol#1{\overline{#1}{}}
\def\spin{\on{Spin}}
\def\cat{\on{cat}}
\def\lbar{\ell}
\def\qed{\quad\rule{8pt}{8pt}\bigskip}
\def\ssize{\scriptstyle}
\def\a{\alpha}
\def\bz{{\Bbb Z}}
\def\Rhat{\hat{R}}
\def\im{\on{im}}
\def\ct{\widetilde{C}}
\def\ext{\on{Ext}}
\def\sq{\on{Sq}}
\def\eps{\epsilon}
\def\ar#1{\stackrel {#1}{\rightarrow}}
\def\br{{\bold R}}
\def\bC{{\bold C}}
\def\bA{{\bold A}}
\def\bB{{\bold B}}
\def\bD{{\bold D}}
\def\bh{{\bold H}}
\def\bQ{{\bold Q}}
\def\bP{{\bold P}}
\def\bx{{\bold x}}
\def\bo{{\bold{bo}}}
\def\si{\sigma}
\def\Vbar{{\overline V}}
\def\dbar{{\overline d}}
\def\wbar{{\overline w}}
\def\Sum{\sum}
\def\tfrac{\textstyle\frac}
\def\tb{\textstyle\binom}
\def\Si{\Sigma}
\def\w{\wedge}
\def\equ{\begin{equation}}
\def\b{\beta}
\def\G{\Gamma}
\def\L{\Lambda}
\def\g{\gamma}
\def\k{\kappa}
\def\psit{\widetilde{\Psi}}
\def\tht{\widetilde{\Theta}}
\def\psiu{{\underline{\Psi}}}
\def\thu{{\underline{\Theta}}}
\def\aee{A_{\text{ee}}}
\def\aeo{A_{\text{eo}}}
\def\aoo{A_{\text{oo}}}
\def\aoe{A_{\text{oe}}}
\def\vbar{{\overline v}}
\def\endeq{\end{equation}}
\def\sn{S^{2n+1}}
\def\zp{\bold Z_p}
\def\cR{{\mathcal R}}
\def\P{{\mathcal P}}
\def\cQ{{\mathcal Q}}
\def\cj{{\cal J}}
\def\zt{{\bold Z}_2}
\def\bs{{\bold s}}
\def\bof{{\bold f}}
\def\bq{{\bold Q}}
\def\be{{\bold e}}
\def\Hom{\on{Hom}}
\def\ker{\on{ker}}
\def\kot{\widetilde{KO}}
\def\coker{\on{coker}}
\def\da{\downarrow}
\def\colim{\operatornamewithlimits{colim}}
\def\zphat{\bz_2^\wedge}
\def\io{\iota}
\def\om{\omega}
\def\Prod{\prod}
\def\e{{\cal E}}
\def\zlt{\Z_{(2)}}
\def\exp{\on{exp}}
\def\abar{{\overline a}}
\def\xbar{{\overline x}}
\def\ybar{{\overline y}}
\def\zbar{{\overline z}}
\def\mbar{{\overline m}}
\def\nbar{{\overline n}}
\def\sbar{{\overline s}}
\def\kbar{{\overline k}}
\def\bbar{{\overline b}}
\def\et{{\widetilde E}}
\def\ni{\noindent}
\def\tsum{\textstyle \sum}
\def\coef{\on{coef}}
\def\den{\on{den}}
\def\lcm{\on{l.c.m.}}
\def\vi{v_1^{-1}}
\def\ot{\otimes}
\def\psibar{{\overline\psi}}
\def\thbar{{\overline\theta}}
\def\mhat{{\hat m}}
\def\exc{\on{exc}}
\def\ms{\medskip}
\def\ehat{{\hat e}}
\def\etao{{\eta_{\text{od}}}}
\def\etae{{\eta_{\text{ev}}}}
\def\dirlim{\operatornamewithlimits{dirlim}}
\def\gt{\widetilde{L}}
\def\lt{\widetilde{\lambda}}
\def\st{\widetilde{s}}
\def\ft{\widetilde{f}}
\def\sgd{\on{sgd}}
\def\lfl{\lfloor}
\def\rfl{\rfloor}
\def\ord{\on{ord}}
\def\gd{{\on{gd}}}
\def\rk{{{\on{rk}}_2}}
\def\nbar{{\overline{n}}}
\def\MC{\on{MC}}
\def\lg{{\on{lg}}}
\def\cH{\mathcal{H}}
\def\cS{\mathcal{S}}
\def\cP{\mathcal{P}}
\def\N{{\Bbb N}}
\def\Z{{\Bbb Z}}
\def\Q{{\Bbb Q}}
\def\R{{\Bbb R}}
\def\C{{\Bbb C}}
\def\l{\left}
\def\r{\right}
\def\mo{\on{mod}}
\def\xt{\times}
\def\notimm{\not\subseteq}
\def\Remark{\noindent{\it  Remark}}
\def\kut{\widetilde{KU}}

\def\*#1{\mathbf{#1}}
\def\0{$\*0$}
\def\1{$\*1$}
\def\22{$(\*2,\*2)$}
\def\33{$(\*3,\*3)$}
\def\ss{\smallskip}
\def\ssum{\sum\limits}
\def\dsum{\displaystyle\sum}
\def\la{\langle}
\def\ra{\rangle}
\def\on{\operatorname}
\def\od{\text{od}}
\def\ev{\text{ev}}
\def\o{\on{o}}
\def\U{\on{U}}
\def\lg{\on{lg}}
\def\a{\alpha}
\def\bz{{\Bbb Z}}
\def\eps{\varepsilon}
\def\bc{{\bold C}}
\def\bN{{\bold N}}
\def\nut{\widetilde{\nu}}
\def\tfrac{\textstyle\frac}
\def\b{\beta}
\def\G{\Gamma}
\def\g{\gamma}
\def\zt{{\Bbb Z}_2}
\def\zth{{\bold Z}_2^\wedge}
\def\bs{{\bold s}}
\def\bx{{\bold x}}
\def\bof{{\bold f}}
\def\bq{{\bold Q}}
\def\be{{\bold e}}
\def\lline{\rule{.6in}{.6pt}}
\def\xb{{\overline x}}
\def\xbar{{\overline x}}
\def\ybar{{\overline y}}
\def\zbar{{\overline z}}
\def\ebar{{\overline \be}}
\def\nbar{{\overline n}}
\def\ubar{{\overline u}}
\def\bbar{{\overline b}}
\def\et{{\widetilde e}}
\def\lf{\lfloor}
\def\rf{\rfloor}
\def\ni{\noindent}
\def\ms{\medskip}
\def\Khat{{\widehat K}}
\def\what{{\widehat w}}
\def\Yhat{{\widehat Y}}
\def\abar{{\overline{a}}}
\def\minp{\min\nolimits'}
\def\mul{\on{mul}}
\def\N{{\Bbb N}}
\def\Z{{\Bbb Z}}
\def\S{\Sigma}
\def\Q{{\Bbb Q}}
\def\R{{\Bbb R}}
\def\C{{\Bbb C}}
\def\notint{\cancel\cap}
\def\cS{\mathcal S}
\def\cR{\mathcal R}
\def\el{\ell}
\def\TC{\on{TC}}
\def\wgt{\on{wgt}}
\def\wpt{\widetilde{p_2}}
\def\dstyle{\displaystyle}
\def\Om{\Omega}
\def\ds{\dstyle}
\def\tz{tikzpicture}
\def\zcl{\on{zcl}}
\def\Vb#1{{\overline{V_{#1}}}}
\title
{On the unordered configuration space $C(RP^n,2)$}
\author{Donald M. Davis}
\address{Department of Mathematics, Lehigh University\\Bethlehem, PA 18015, USA}
\email{dmd1@lehigh.edu}
\date{May 24, 2019}

\keywords{configuration space, immersions, topological complexity, Grassmann manifold}
\thanks {2000 {\it Mathematics Subject Classification}: 55R80, 57R42, 55M30, 55S15.}

\maketitle
\begin{abstract} We prove that, if $n$ is a 2-power, the unordered configuration space $C(RP^n,2)$ cannot be immersed in $\R^{4n-2}$ nor embedded as a closed subspace of $\R^{4n-1}$, optimal results, while if $n$ is not a 2-power, $C(RP^n,2)$ can be immersed in $\R^{4n-3}$. We also obtain cohomological lower bounds for the topological complexity of $C(RP^n,2)$, which are nearly optimal when $n$ is a 2-power. We also give a new description of the mod-2 cohomology algebra of the Grassmann manifold $G_{{n+1},2}$.
 \end{abstract}
\section{Nonimmersions, nonembeddings, and immersions of $C(RP^n,2)$}
If $M$ is an $n$-manifold, the unordered configuration space of two points in $M$, $C(M,2)=(M\times M-\Delta)/\zt$, is a noncompact $2n$-manifold, and hence can be immersed in $\R^{4n-1}$ (\cite{Wh}) and embedded as a closed subspace of $\R^{4n}$.(\cite{Gan}) We prove the following optimal nonimmersion and nonembedding theorem for $C(P^n,2)$ when $n$ is a 2-power. Here $P^n$ denotes $n$-dimensional real projective space.
\begin{thm}\label{main} If $n$ is a $2$-power, $C(P^n,2)$ cannot be immersed in $\R^{4n-2}$ nor embedded as a closed subspace of $\R^{4n-1}$.\end{thm}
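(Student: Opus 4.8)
The plan is to deduce both statements from a single fact: the dual (normal) Stiefel--Whitney class $\wbar_{2n-1}\bigl(C(P^n,2)\bigr)\in H^{2n-1}\bigl(C(P^n,2);\zt\bigr)$ is nonzero when $n$ is a $2$-power. Granting this, a Whitney-type immersion in $\R^{4n-2}$ would give a normal bundle of rank $2n-2$, forcing $\wbar_i=0$ for $i>2n-2$ and contradicting $\wbar_{2n-1}\neq0$; and a closed embedding in $\R^{4n-1}$ would give a normal bundle of rank $2n-1$ whose top Stiefel--Whitney class is $\wbar_{2n-1}$, and this class --- being the mod $2$ normal Euler class of a closed submanifold of Euclidean space --- must vanish by the usual Poincar\'e--Lefschetz duality argument (the self-intersection is null in $\R^N$), again contradicting $\wbar_{2n-1}\neq0$. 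Thus everything reduces to computing $\wbar_{2n-1}$.

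To set up the cohomology I would use the free $2$-fold cover $p\colon\tilde C:=P^n\times P^n-\Delta\to C(P^n,2)$. Equivariantly retracting $\tilde C$ onto the space $\tilde Y$ of orthogonal pairs $(\ell_1,\ell_2)$ and sending $(\ell_1,\ell_2)\mapsto(\ell_1\oplus\ell_2,\ell_1)$ identifies $\tilde Y$ with the projectivization $P(\gamma)$ of the tautological $2$-plane bundle $\gamma$ over the Grassmannian $G_{n+1,2}$. Hence $H^*(\tilde C;\zt)$ is free over $H^*(G_{n+1,2};\zt)$ on $\{1,a\}$ with $a^2=w_1a+w_2$, and the deck involution acts by $a\mapsto a+w_1$ (it exchanges $\ell_1$ with $\ell_2=\ell_1^\perp$). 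Feeding in the description of $H^*(G_{n+1,2};\zt)$ established later in the paper, one obtains a presentation of $H^*\bigl(C(P^n,2);\zt\bigr)$ generated by the pulled-back classes $w_1,w_2$ together with the class $v\in H^1$ classifying the double cover, where $v$ generates $\ker p^*$.

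Next I would compute the total tangent Stiefel--Whitney class $w\bigl(C(P^n,2)\bigr)$. Pulling back to $\tilde C$ the tangent bundle is $\pi_1^*TP^n\oplus\pi_2^*TP^n$, and since the two tautological lines satisfy $(1+w_1(L_1))(1+w_1(L_2))=1+w_1+w_2=w(\gamma)$, one finds, for every $n$,
\[
p^*w\bigl(C(P^n,2)\bigr)=(1+w_1+w_2)^{n+1},
\]
pulled back from $G_{n+1,2}$. Consequently $\wbar\bigl(C(P^n,2)\bigr)\equiv(1+w_1+w_2)^{-(n+1)}$ modulo the ideal $(v)=\ker p^*$, and its degree $2n-1$ part vanishes modulo $(v)$ because $\dim G_{n+1,2}=2n-2$. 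So $\wbar_{2n-1}$ is automatically $v$-divisible and lives in the top group $H^{2n-1}\bigl(C(P^n,2);\zt\bigr)$, which, since $C(P^n,2)$ is homotopy equivalent to the closed connected $(2n-1)$-manifold $Y$ of unordered orthogonal line-pairs, equals $\zt$.

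The heart of the matter --- and the step I expect to be the main obstacle --- is to decide whether this single $\zt$-generator is hit, i.e.\ to extract the $v$-divisible part of $\wbar_{2n-1}$. Concretely I would write $w\bigl(C(P^n,2)\bigr)=(1+w_1+w_2)^{n+1}+v\cdot(\cdots)$ in the presentation above, using a splitting $TC\cong TY\oplus\mu$ over the core $Y$ together with Wu's formula on the closed manifold $Y$ to pin down the correction term, then invert and read off the coefficient of the top monomial. This coefficient is a sum of mod $2$ binomial coefficients of the shape $\binom{n+k}{k}$, and Lucas' theorem makes it nonzero exactly when $n$ is a $2$-power --- which is where the hypothesis enters, and which is consistent with the complementary immersion $C(P^n,2)\subset\R^{4n-3}$ that holds when $n$ is not a $2$-power. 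Verifying this final nonvanishing, and checking that $v$ genuinely survives multiplication into top degree, is the crux of the argument.
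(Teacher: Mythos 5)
Your overall architecture is the same as the paper's: reduce both conclusions to $\wbar_{2n-1}(\eta(C(P^n,2)))\ne0$ (for the closed-embedding half this is exactly \cite[Cor 11.4]{MS}, whose proof is the Euler-class/duality argument you sketch), replace $C(P^n,2)$ by the closed $(2n-1)$-manifold of orthogonal line pairs fibered over $G_{n+1,2}$, and finish with a Lucas-type binomial computation in the top group $H^{2n-1}\cong\zt$. Your pullback computation $p^*w(\tau C)=(1+w_1+w_2)^{n+1}$ is correct and consistent with the paper, where $w(\tau(C_n))=(1+x+u)(1+x)^{-1}(1+x+y)^{n+1}$ and $p^*$ annihilates the ideal $(u)$ (your $v$ is the paper's $u$, by Handel's identification of the class of the order-reversing double cover).

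However, the step you defer as ``the crux'' is not a finishing verification but the whole theorem, and the tool you propose for it cannot reach it. Writing $\tau C_n\cong\tau W_n\oplus\mu$ with $\mu$ the line bundle $p_1:C_n\to W_n$, what must be computed is $w_1(\mu)\in H^1(W_n)=\langle x,u\rangle$, and specifically its $u$-component. Wu's formula on the closed core $W_n$ determines $w(\tau W_n)$ (the paper uses it only to get $w_1(\tau W_n)=nx$), but it says nothing about the normal line bundle $\mu$; and the $u$-component of $w_1(\mu)$ is equally invisible to $p^*$, since $\ker p^*=(u)$. So neither of your two inputs --- Wu on the core, and the covering-space pullback --- detects the one term that matters. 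And it is precisely this term that carries the answer: if $w_1(\mu)$ lay in the subalgebra pulled back from $G_{n+1,2}$, then $w(\eta(C_n))$ would too, and its degree-$(2n-1)$ component would vanish identically for every $n$, because the generator of $H^{2n-1}(C_n)$ is $u$-divisible while $\dim G_{n+1,2}=2n-2$. The paper closes this gap with a genuinely geometric argument (Lemma \ref{lem}): the explicit $45^\circ$-rotation retraction exhibits $\mu\cong L_R\ot L_O$, where $L_O$ pulls back the orientation cover of the Grassmannian (so $w_1(L_O)=x$) and $L_R$ is associated to the order-reversing double cover $Z_n\to W_n$, with $w_1(L_R)=u$ by a theorem of Handel \cite{Han}; hence $w_1(\mu)=x+u$, producing the summand $u(1+u+y)^{-n-1}$ in $w(\eta(C_n))$ whose top coefficient $\sum_t\binom{2n-2^{t-1}}{n-2^{t-1}}\binom{2n+2^{t-1}-2}{2^t-2}$ (not of the shape $\binom{n+k}{k}$ you anticipate) is odd exactly when $n$ is a $2$-power. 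Without an input of this kind --- some identification of the sphere bundle of $\mu$, equivalently of the double cover it classifies --- your proof does not close.
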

This will be accomplished by showing that the Stiefel-Whiney class $w_{2n-1}$ of its stable  normal bundle is nonzero. The implication for embeddings of noncompact manifolds, which is not so well-known as that for immersions, is proved in \cite[Cor 11.4]{MS}.

For contrast, we prove the following immersion theorem.
\begin{thm}\label{imm} If $n$ is not a $2$-power, then $C(P^n,2)$ can be immersed in $\R^{4n-3}$.\end{thm}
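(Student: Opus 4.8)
The plan is to invoke Hirsch's immersion theorem for open manifolds, which reduces the statement to a bundle-theoretic one: since $C(P^n,2)$ is a noncompact, hence open, $2n$-manifold, it immerses in $\R^{4n-3}$ if and only if its stable normal bundle $\nu$ has geometric dimension $\gd(\nu)\le 2n-3$. So the whole task becomes identifying $\nu$ and bounding its geometric dimension.

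First I would compute $\nu$ explicitly. Let $F=P^n\times P^n-\Delta$ be the ordered configuration space, a double cover of $C(P^n,2)$ whose deck involution $\tau$ is the coordinate swap, and let $L$ be the associated real line bundle over $C(P^n,2)$. Writing $\xi$ for the tautological line bundle and using $TP^n\oplus\R\cong(n+1)\xi$, the tangent bundle of $F$ satisfies $TF\oplus\R^2\cong(n+1)(\xi_1\oplus\xi_2)$ equivariantly, where $\xi_i$ is pulled back from the $i$-th factor and $\tau$ swaps the two summands. The map $g\colon C(P^n,2)\to G_{n+1,2}$ sending a pair of distinct lines to the plane they span identifies the descent of $\xi_1\oplus\xi_2$ with $g^*\gamma$ (via $(v_1,v_2)\mapsto v_1+v_2$), while the swap action on the trivial $\R^2$ descends to $\R\oplus L$. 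Passing to the quotient therefore gives
\[
TC\oplus\R\oplus L\cong(n+1)\,g^*\gamma,\qquad\text{hence}\qquad \nu\cong_s L-(n+1)\,g^*\gamma .
\]
Because the open Möbius-band fibers of $g$ retract to their core circles, $C(P^n,2)$ deformation retracts onto a closed $(2n-1)$-manifold $B$ (the perpendicular pairs), so every bundle-theoretic question is posed over a complex of dimension $2n-1$, and $\nu$ is pulled back, up to the single class $t=w_1(L)$, from $G_{n+1,2}$.

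Next I would run the standard obstruction theory for reducing the structure group of $\nu$ from $O$ to $O(2n-3)$ over the $(2n-1)$-complex $B$. Since $O/O(2n-3)$ is $(2n-4)$-connected and $\dim B=2n-1$, the only obstructions live in $H^{2n-2}\big(B;\pi_{2n-3}(O/O(2n-3))\big)$ and $H^{2n-1}\big(B;\pi_{2n-2}(O/O(2n-3))\big)$. The primary obstruction is the mod-$2$ class $w_{2n-2}(\nu)$, which from the formula above equals $g^*w_{2n-2}(-(n+1)\gamma)+t\cdot g^*w_{2n-3}(-(n+1)\gamma)$. Here I would invoke the description of $H^*(G_{n+1,2};\zt)$ and the expansion of $(1+w_1+w_2)^{-(n+1)}$ to show this vanishes, together with the vanishing of $w_{2n-1}(\nu)$ — precisely the class that is nonzero in Theorem \ref{main} — the combination being exactly what the non-$2$-power hypothesis provides. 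This is governed by the same Grassmannian cohomology computation that underlies Theorem \ref{main}.

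The main obstacle is the top, secondary obstruction in $H^{2n-1}\big(B;\pi_{2n-2}(O/O(2n-3))\big)$: vanishing of Stiefel–Whitney classes is only necessary, and one must show this remaining obstruction can be annihilated. I would attack it by computing $H^{2n-1}(B)$, together with its integral or $w_1$-twisted refinement, from the cohomology of $G_{n+1,2}$ and the Gysin sequence of the circle bundle $B\to G_{n+1,2}$, and then either show the obstruction class is zero outright or use the indeterminacy coming from the choice of lift over the $(2n-2)$-skeleton to kill it. The arithmetic of the non-$2$-power hypothesis, which already forces the primary class to vanish, should also control this secondary term; verifying that compatibility is where the real work lies.
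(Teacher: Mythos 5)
Your overall strategy is the same as the paper's: apply Hirsch's theorem, so that the immersion follows from lifting the stable normal bundle classifying map through $BO(2n-3)$, with the primary obstruction $w_{2n-2}(\eta(C_n))$ shown to vanish for $n$ not a $2$-power by the Grassmannian cohomology computation, and a remaining top obstruction to be dealt with. Your derivation of the normal bundle itself is a genuinely different and rather slick route: the equivariant identity $TF\oplus\R^2\cong(n+1)(\xi_1\oplus\xi_2)$ on the ordered configuration space, descending to $TC\oplus\R\oplus L\cong(n+1)g^*\gamma$, gives $w(\eta(C_n))=(1+u)(1+x+y)^{-n-1}$ directly, which agrees with the paper's (1.5) since $(1+x)(1+x+u)^{-1}=1+u$ in view of $xu=u^2$; the paper instead routes through Feder's retraction to $W_n$, Oproiu's formula for $w(\tau(G_n))$, a Wu-class computation of $w_1(\tau(W_n))$, and Handel's identification of $w_1(L_R)=u$. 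So the bundle identification in your proposal is correct and arguably more self-contained.

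The genuine gap is the final obstruction in $H^{2n-1}\bigl(C_n;\pi_{2n-2}(V_{2n-3})\bigr)$, which you explicitly defer (``where the real work lies''). First, you miss the parity dichotomy that makes half the theorem free: $\pi_{2n-2}(V_{2n-3})=0$ when $n$ is odd, so the secondary obstruction only exists for $n$ even. Second, for $n$ even the class does not ``vanish outright'' by arithmetic of the hypothesis; the paper kills it through its indeterminacy, which is the image of $\sq^2+w_2(\eta(C_n)):H^{2n-3}(C_n)\to H^{2n-1}(C_n)$ (following Monks). Concretely, $w_2(\eta(C_n))=y+u^2+\binom{n+2}2x^2$ for $n$ even, and the monomial $x^{2^t-3}y^{n-2^{t-1}}\in H^{2n-3}(C_n)$ is annihilated by $\sq^2$ and by multiplication by $y$ and $x^2$, but multiplies nontrivially by $u^2$; hence the indeterminacy is all of $H^{2n-1}(C_n)=\zt$ and any value of the final obstruction can be normalized to zero. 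This computation, which depends on the structure of $H^*(C_n)$ established in Section \ref{cohsec}, is the substantive content your proposal lacks; your suggested Gysin-sequence analysis of $H^{2n-1}$ alone would not produce it, since one must evaluate the specific twisted-$\sq^2$ operator on a well-chosen class. A smaller technical omission: obstruction theory for the nonorientable fibration $V_k\to BO(k)\to BO$ needs justification (the paper cites Nussbaum, valid for $k=2n-3$ odd), which your ``standard obstruction theory'' glosses over.
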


This work was motivated by a question of Mike Harrison. In \cite{Har}, he introduces the notion of totally nonparallel immersions and proves that if a manifold $M$ admits a totally nonparallel immersion in $\R^k$, then $C(M,2)$ immerses in $\R^k$. Thus we obtain a result about nonexistence of totally nonparallel immersions of 2-power real projective spaces.

\begin{proof}[Proof of Theorem \ref{main}] We denote $C_n=C(P^n,2)$, which we think of as the space of unordered pairs of distinct lines through the origin in $\R^{n+1}$. Also, $W_n$ denotes the subspace consisting of unordered pairs of orthogonal lines through the origin in $\R^{n+1}$, and $G_n$ the Grassmann manifold, usually denoted $G_{n+1,2}$, of 2-planes in $\R^{n+1}$. There is a deformation retraction $C_n\mapright{p_1} W_n$ described in \cite[p.324]{Fed}, which we will discuss thoroughly in our proof of Lemma \ref{lem}, and also an obvious map $W_n\mapright{p_2}G_n$, which is a $P^1$-bundle.

We will work only with $\zt$-cohomology. In Section \ref{cohsec}, we give a new description of the algebra $H^*(G_n)$. Here we describe just the part needed in this proof, which was first obtained by Feder in \cite[Cor 4.1]{Fed}.
 The algebra $H^*(G_n)$ is generated by classes $x=w_1$ and $y=w_2$ modulo two relations which cause the top two groups to be $H^{2n-2}(G_n)=\zt$ (resp.~$H^{2n-3}(G_n)=\zt$) with $x^{2i}y^{n-1-i}\ne0$ (resp.~$x^{2i-1}y^{n-1-i}\ne0$) iff $i=2^t-1$ for $t\ge0$ (resp.~$t\ge1$) and $2^t\le n$.  By \cite[Thm 4.3]{Fed}, $p_2^*$ is injective and \begin{equation}\label{HW}H^*(W_n)\approx H^*(G_n)[u]/(u^2=xu),\end{equation} with $|u|=1$. Also, $\sq^1y=xy$.

Let $\tau$ denote the tangent bundle,  $\eta$ a stable normal bundle, and $w$ the total Stiefel-Whitney class of a bundle. In \cite[(3)]{Opr}, it is shown that
\begin{equation}\label{Opeq}w(\tau(G_n))=(1+x)^{-2}(1+x+y)^{n+1}.\end{equation}
The map $p_2$ induces a surjective vector bundle  homomorphism $\tau(W_n)\to \tau(G_n)$, and hence a surjective  homomorphism
$$\widetilde{p_2}:\tau(W_n)\to p_2^*\tau(G_n)$$
of vector bundles over $W_n$.
Then $\ker(\wpt)$ is a line-bundle over $W_n$, and there is a vector bundle isomorphism
$$\ker(\wpt)\oplus p_2^*\tau(G_n)\approx \tau(W_n).$$
Thus
\begin{equation}\label{w}w(\tau(W_n))=(1+w_1(\ker(\wpt)))(1+x)^{-2}(1+x+y)^{n+1}.\end{equation}

By the Wu formula, $w_1(\tau(W_n))$ equals the element $v_1$ of $H^1(W_n)$ for which
$$\sq^1=\cdot v_1:H^{2n-2}(W_n)\to H^{2n-1}(W_n).$$
Since, for $j>0$, $\sq^1(x^{2^{j+1}-2}y^{n-2^j})=0$ and $$\sq^1(x^{2^{j+1}-3}y^{n-2^j}u)=x^{2^{j+1}-2}y^{n-2^j}u+nx^{2^{j+1}-2}y^{n-2^j}u+x^{2^{j+1}-3}y^{n-2^j}\cdot xu=nx^{2^{j+1}-2}y^{n-2^j}u,$$ we deduce $w_1(\tau(W_n))=nx$. From (\ref{w}), we obtain
$$nx=w_1(\ker(\wpt))+(n+1)x,$$
so $w_1(\ker(\wpt))=x$ and  (\ref{w}) becomes
$$w(\tau(W_n))=(1+x)^{-1}(1+x+y)^{n+1},$$
and hence
$$w(\eta(W_n))=(1+x)(1+x+y)^{-n-1}.$$

By Lemma \ref{lem}, we obtain
$$w(\eta(C_n))=(1+x)(1+x+y)^{-n-1}(1+x+u)^{-1}.$$
Since $x^iu^j=u^{i+j}$ for $j>0$,
$(1+x+u)^{-1}=1+\sum_{i\ge1}(x^i+u^i)=(1+x)^{-1}+u(1+u)^{-1}$ and
\begin{equation}w(\eta(C_n))=(1+x+y)^{-n-1}+u(1+u+y)^{-n-1}.\label{wc}\end{equation}

By (\ref{HW}), $H^*(C_n)\approx H^*(W_n)\approx H^*(G_n)\oplus uH^*(G_n)$, and the portion with the $u$ will always give a stronger result than the portion without.
Thus  the relevant part of $w(\eta(C_n))$ is \begin{equation}\label{alln}\ds\sum_{j,k}\tbinom{-n-1}j\tbinom{-n-1-j}ku^{k+1}y^j.\end{equation} The top dimension $H^{2n-1}(C_n)=\zt$ has as its only nonzero monomials $u^{2^t-1}y^{n-2^{t-1}}$ (all equal), and so
\begin{eqnarray*}w_{2n-1}(\eta(C_n))&=&\sum_t\tbinom{-n-1}{n-2^{t-1}}\tbinom{-2n-1+2^{t-1}}{2^t-2}\\
&=&\sum_t\tbinom{2n-2^{t-1}}{n-2^{t-1}}\tbinom{2n+2^{t-1}-2}{2^t-2}.\end{eqnarray*}
Using Lucas's Theorem, it is easy to see that $\tbinom{2n-2^{t-1}}{n-2^{t-1}}$ is odd iff $n$ is a 2-power, and when $n$ is a 2-power and $2^{t-1}\le n$, $\tbinom{2n+2^{t-1}-2}{2^t-2}$ is odd iff $t=1$, proving the theorem.

\end{proof}

The following lemma was used above
\begin{lem}\label{lem} With notation as above, $w(\tau(C_n))=(1+x+u)w(\tau(W_n))$.\end{lem}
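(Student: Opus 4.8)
The plan is to use that $p_1$ exhibits $W_n$ as a smooth codimension-one submanifold of the $2n$-manifold $C_n$ onto which $C_n$ deformation retracts. Writing $i\colon W_n\hookrightarrow C_n$ for the inclusion, there is a normal line bundle $\nu$ with $i^*\tau(C_n)\cong\tau(W_n)\oplus\nu$, so $i^*w(\tau(C_n))=w(\tau(W_n))\,w(\nu)$. Because $i$ is a homotopy inverse to $p_1$, the map $i^*$ is an isomorphism on $\zt$-cohomology and may be used to identify $H^*(C_n)$ with $H^*(W_n)$; under this identification the asserted formula is exactly $w(\nu)=1+x+u$. Thus the whole lemma reduces to computing $w_1(\nu)$ and showing it equals $x+u$.

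To identify $\nu$ I would work fibrewise over $G_n$. The projection $C_n\to G_n$ sending an unordered pair to the plane it spans has fibre the open M\"obius band $C(P^1,2)$, and Feder's retraction is on each fibre the standard retraction of this band onto its core circle, which is the corresponding fibre of $W_n\to G_n$; hence $\nu$ is the fibrewise normal bundle of the core. I would compute it by descent along the double cover $q\colon\widetilde W_n\to W_n$ of \emph{ordered} orthogonal pairs. Over $\widetilde W_n$ the tautological lines $\lambda_1,\lambda_2$ are defined, the vertical tangent bundle of $C_n$ along $W_n$ is $(\lambda_1\otimes\lambda_2)\oplus(\lambda_2\otimes\lambda_1)$, and the deck involution swaps the two summands. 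Splitting into symmetric and antisymmetric parts, the symmetric part is the tangent bundle of the core and descends to $p_2^*(\Lambda^2\gamma)$, while the antisymmetric part is $\nu$ and descends to $p_2^*(\Lambda^2\gamma)\otimes\xi$, where $\xi$ is the real line bundle classifying $q$. Therefore $w_1(\nu)=w_1(p_2^*\Lambda^2\gamma)+w_1(\xi)=x+w_1(\xi)$, the first term being $x$ because $w_1(\Lambda^2\gamma)=w_1(\gamma)=x$.

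It remains to prove $w_1(\xi)=u$. Both classes restrict to the generator of $H^1$ of the fibre circle --- the cover $q$ is nontrivial there since going once around the core interchanges the two orthogonal lines, and $u$ is the Leray--Hirsch fibre class --- so they agree modulo $H^1(G_n)=\langle x\rangle$, and the only question is whether an extra $x$ appears. I would settle this from the Gysin sequence of $q$, in which the kernel of $q^*$ on $H^1(W_n)$ is exactly $\langle w_1(\xi)\rangle$; hence $w_1(\xi)=u$ is equivalent to $q^*u=0$. This is where the thorough discussion of Feder's retraction enters: one must check that Feder's generator $u$ is precisely the class of the ordering cover (equivalently that $q^*u$, which is automatically deck-invariant and so a multiple of $x$, in fact vanishes), rather than its companion $u+x$, which satisfies the same relation $u^2=xu$.

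The main obstacle is precisely this last step. The fibrewise picture only determines $w_1(\nu)$ up to the ambiguity $x+u$ versus $u$, and resolving it requires the explicit geometry of the deformation retraction together with Feder's normalization of $u$; everything else is formal.
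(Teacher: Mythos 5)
Your reduction is sound and runs structurally parallel to the paper's own proof, though by a different mechanism. The paper shows that $p_1$ is literally the projection of a line bundle $\theta$ over $W_n$, with explicit local trivializations $h_\a(\{\ell,\ell'\})=(p_1(\{\ell,\ell'\}),\tan(2\theta))$ built from local sections of the 4-fold cover $Z_n^+\to W_n$; reversing either the ordering or the orientation negates the trivialization, giving $\theta=L_R\otimes L_O$ with $w_1(L_O)=x$ (pullback of the orientation cover $G_n^+\to G_n$) and $L_R$ the line bundle of the ordering cover $Z_n\to W_n$ --- your $\xi$. Your version, identifying $\nu$ as the normal bundle of the zero-section $W_n\subset C_n$ and computing it fibrewise via the tautological lines over the ordered cover, reaches the same factorization $w_1(\nu)=x+w_1(\xi)$ and is a legitimate alternative. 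One small slip: with the transpose/flip identification of $\mathrm{Hom}(\lambda_1,\lambda_2)$ with $\mathrm{Hom}(\lambda_2,\lambda_1)$, simultaneous rotation of the two lines --- the core-tangent direction --- lies in the \emph{anti}-invariant part, so your symmetric/antisymmetric labels are interchanged; but your conclusion survives, since $\nu$ is in any case the summand restricting to the M\"obius bundle on each fiber circle of $W_n\to G_n$, which forces the $\xi$-twist onto $\nu$ rather than onto the core tangent.

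The genuine gap is exactly where you flag it: the normalization $w_1(\xi)=u$ rather than $u+x$. This cannot be settled formally, since $u\mapsto u+x$ is a ring automorphism of $H^*(G_n)[u]/(u^2=xu)$ fixing $H^*(G_n)$, and both candidates restrict to the generator on a fiber circle; moreover the two answers yield the genuinely different formulas $(1+x+u)$ versus $(1+u)$, so the lemma's statement depends on resolving it. Your Gysin-sequence criterion ($w_1(\xi)$ spans $\ker(q^*)$ on $H^1(W_n)$, so $w_1(\xi)=u$ iff $q^*u=0$) is correct, but you never verify $q^*u=0$, and doing so amounts to computing $H^1$ of the projective Stiefel-type manifold $Z_n$ together with the map $q^*$ --- a real computation, not bookkeeping. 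The paper closes precisely this point by citation: Handel \cite[Lemma 3.3 and Prop 3.5]{Han} proves $w_1(L_R)=u$ for the cover $Z_{n+1,2}\to SZ_{n+1,2}$, which is the map $Z_n\to W_n$ here. So your argument becomes complete once you import (or reprove) Handel's computation, but as written the decisive step is missing.
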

\begin{proof} The map $p_1:C_n\to W_n$ is defined as follows. For distinct lines $\ell$ and $\ell'$, working in their plane, let $m$ and $m'$ be the pair of orthogonal lines bisecting the two angles between $\ell$ and $\ell'$, and then let $k$ and $k'$ be $45^{\text o}$ rotations of $m$ and $m'$. Then $p_1(\{\ell,\ell'\})=\{k,k'\}$, and the homotopy from the identity map of $C_n$ to $i\circ p_1$ moves $\ell$ and $\ell'$ uniformly toward the closer of $k$ and $k'$. Here $i$ is the inclusion of $W_n$ in $C_n$.
 Two scenarios for this are illustrated in Figure \ref{expl}.

\begin{fig}\label{expl}
{\bf The map $C_n\to W_n$}
\begin{center}
\begin{\tz}[scale=.75]
\draw (0,0) -- (4,0);
\draw (6,0) -- (10,0);
\draw (2,-2) -- (2,2);
\draw (8,-2) -- (8,2);
\node at (4.3,0) {$m$};
\node at (10.3,0) {$m$};
\node at (2,-2.3) {$m$};
\node at (8,-2.3) {$m$};
\draw (3.6,1.6) -- (.4,-1.6);
\draw (9.6,1.6) -- (6.4, -1.6);
\draw (3.6,-1.6) -- (.4,1.6);
\draw (9.6,-1.6) -- (6.4,1.6);
\node at (3.7,1.7) {$k$};
\node at (.3,1.7) {$k$};
\node at (9.7,1.7) {$k$};
\node at (6.3,1.7) {$k$};
\draw [ultra thick] (2.3,1.9) -- (1.7,-1.9);
\draw [ultra thick] (1.7,1.9) -- (2.3,-1.9);
\draw [ultra thick] (6.4,-.7) -- (9.6,.7);
\draw [ultra thick] (6.4,.7) -- (9.6,-.7);
\node at (1.66,2.15) {$\ell$};
\node at (2.34,2.15) {$\ell'$};
\node at (9.8,.7) {$\ell'$};
\node at (6.2,.7) {$\ell$};
\draw [->] (1.79,1.33) -- (1.04,.96);
\draw [->] (2.21,1.33) -- (2.96,.96);
\draw [->] (9.28,.56) -- (9,1);
\draw [->] (6.72,.56) -- (7,1);
\end{\tz}
\end{center}
\end{fig}

Let $Z_n$ be the space of ordered pairs of orthogonal lines in $\R^{n+1}$, and $Z_n^+$ the space of ordered pairs of orthogonal lines in $\R^{n+1}$ together with an orientation on the plane which they span. Let $Z_n^+\mapright{p} W_n$ forget the order and the orientation. This $p$ is a 4-sheeted covering space. Suppose $p$ has a section $s_\a$ on an open set $U_\a$ of $W_n$. If $p_1(\{\ell,\ell'\})=\{k,k'\}\in U_\a$, then $s_\a$ specifies an order $(k_1,k_2)$ on $\{k,k'\}$ and an orientation on the plane containing these vectors. A local trivialization of $p_1$ is defined by maps $h_\a:p_1^{-1}(U_\a)\to U_\a\times\R$ with $h_\a(\{\ell,\ell'\})=(p_1(\{\ell,\ell'\}),\tan(2\theta))$, where $\theta\in(-\frac{\pi}4,\frac{\pi}4)$ is the angle, with respect to the orientation, through which $\ell$ or $\ell'$ was rotated to end at $k_1$. Thus $p_1$ is a line bundle $\theta$ over $W_n$.

Reversing the order of $(k_1,k_2)$ in $s_\a$ negates $h_\a$, as does reversing the orientation selected by $s_\a$. Thus our line bundle $\theta$ is $L_R\otimes L_O$, where $L_R$ is the line bundle (named for Reversing) over $W_n$ associated to the double cover $Z_n\to W_n$, and $L_O$ is the line bundle (named for Orientation) over $W_n$ associated to the pullback over $W_n$ of the double cover $G_n^+\to G_n$ from the oriented Grassmannian to the unoriented one. Thus $w_1(\theta)=w_1(L_R)+w_1(L_O)$.

Clearly $w_1(L_O)$ equals $p_2^*$ of the universal $w_1$ of the Grassmannian, and this is our class $x$. That $w_1(L_R)=u$ is proved in \cite[Lemma 3.3 and Prop 3.5]{Han}. Our map $Z_n\to W_n$ is Handel's map $Z_{n+1,2}\to SZ_{n+1,2}$. Thus $w_1(\theta)=u+x$, establishing the lemma, since $w(\tau(C_n))=p_1^*(w(\tau(W_n)))\cdot p_1^*(w(\theta))$.
\end{proof}
The proof of Theorem \ref{main} showed that $w_{2n-1}(\eta(C_n))$ is nonzero iff $n$ is a 2-power.
We believe that Theorem \ref{main} gives all nonimmersion and nonembedding results for spaces $C(P^n,2)$ implied by Stiefel-Whitney classes of the normal bundle. Using our description of $H^*(G_n)$ in Section \ref{cohsec} and its implications for $H^*(C_n)$ along with (\ref{alln}), we have performed an extensive computer search for other results. Those which we found said that if $n=2^r+1$ (resp.~$2^r+2$ or $2^r+4$), then $w_{2n-5}(\eta(C_n))\ne0$ (resp.~$w_{2n-9}(\eta(C_n))\ne0$ or $w_{2n-17}(\eta(C_n))\ne0$), but the nonimmersion and nonembedding results for $C(P^n,2)$ implied by these are in the same dimension as the result for $C(P^{2^r},2)$, and so are implied by Theorem \ref{main}.

Now we prove the existence of immersions in $\R^{4n-3}$ when $n$ is not a 2-power. We continue to denote $C(P^n,2)$ as $C_n$.

\begin{proof}[Proof of Theorem \ref{imm}] We use obstruction theory to show that the map $C_n\to BO$ which classifies the stable normal bundle $\eta(C_n)$ factors through $BO(2n-3)$, which implies the immersion by the well-known theorem of Hirsch.(\cite{Hir}) The theory of modified Postnikov towers developed in \cite{GM} applies to the fibration $V_k\to BO(k)\to BO$ when $k$ is odd by \cite{Nuss}. The fiber $V_{k}$ is a union of Stiefel manifolds, and in our case, all we need is
$$\pi_i(V_{2n-3})=\begin{cases}0&i<2n-3\\ \zt&i=2n-3\\ 0&i=2n-2,\ n\text{ odd}\\ \zt&i=2n-2,\ n\text{ even.}\end{cases}$$
Since $H^{2n}(C_n)=0$, the only possible obstructions are in $H^{2n-2}(C_n;\pi_{2n-3}(V_{2n-3}))$ and $H^{2n-1}(C_n;\pi_{2n-2}(V_{2n-3}))$. The first obstruction is $w_{2n-2}(\eta(C_n))$, which is 0 when $n$ is not a 2-power by a calculation very similar to that in our proof of Theorem \ref{main}. This already implies the immersion when $n$ is odd. When $n$ is even, we argue similarly to \cite[Thm 2.3]{Monks}. The final obstruction has indeterminacy
$$H^{2n-3}(C_n)\xrightarrow{\sq^2+w_2}H^{2n-1}(C_n).$$
By (\ref{wc}), we have, for $n$ even, $w_2(\eta(C_n))=y+u^2+\binom{n+2}2x^2$. The nonzero element in $H^{2n-1}(C_n)$ is $x^{2^t-2}y^{n-2^{t-1}}u$ for an appropriate $t$. In $H^{2n-3}(C_n)$ there is a class $x^{2^t-3}y^{n-2^{t-1}}$ on which $\sq^2$ is 0, multiplication by $y$ and $x^2$ are 0, but multiplication by $u^2$ is nonzero. Therefore the final obstruction can be canceled if it is nonzero.
\end{proof}

\section{Cohomology of $G_{{n+1},2}$}\label{cohsec}
Descriptions of the cohomology ring (mod 2) of the Grassmann manifold $G_{{n+1},2}$ of $2$-planes in $\R^{n+1}$ were given initially by Chern (\cite{Chern}) and Borel (\cite{Borel}).
Here we present what we think is a new description that has been useful in our analysis. It is based on the description given by Feder in \cite{Fed}.
As in the proof of Theorem \ref{main}, we denote $G_{n+1,2}$ by $G_n$.
 In our proof of Theorem \ref{main}, we used \cite[Cor 4.1]{Fed} which stated that, with $x=w_1$ and $y=w_2$ the generators, in the top dimension, $H^{2n-2}(G_n)=\zt$, the nonzero monomials are those $x^{2i}y^{n-1-i}$ for which $i+1$ is a 2-power. Working backwards from this, we can prove the following result.

\begin{thm}\label{HG} In the ring $H^*(G_n)$, monomials $x^iy^j$ are independent if $i+2j<n$. For $\eps\in\{0,1\}$, if $2n-2k-\eps\ge n$, then $H^{2n-2k-\eps}(G_n)$ has basis $\b_1,\ldots,\b_k$, and $x^{2i-\eps}y^{n-k-i}$ equals the sum of those $\b_j$ for which $i+j$ is a $2$-power.
\end{thm}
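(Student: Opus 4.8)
The plan is to work from the standard presentation $H^*(G_n)\approx\zt[x,y]/(\wbar_n,\wbar_{n+1})$, where $x=w_1$, $y=w_2$ and $\wbar_i$ is the degree-$i$ component of the inverse total class $(1+x+y)^{-1}$. Over $\zt$ this inverse factors as $\prod_{s\ge0}(1+x^{2^s}+y^{2^s})$, so
\begin{equation}\wbar_n=\sum_{a+2b=n}\binom{a+b}{b}\,x^ay^b,\label{wbform}\end{equation}
with the coefficients read modulo $2$; by Lucas's Theorem $\binom{a+b}{b}$ is odd precisely when the binary expansions of $a$ and $b$ have disjoint supports, which is the form that makes the $2$-power phenomena visible. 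Two quick consequences: the defining ideal is generated in degrees $n$ and $n+1$, so it meets no lower degree and every monomial $x^iy^j$ with $i+2j<n$ survives, giving the first assertion; and, since $G_n$ is a closed $(2n-2)$-manifold, Poincar\'e duality gives $H^{2n-2k-\eps}(G_n)\approx H^{2k+\eps-2}(G_n)$, where the hypothesis $2n-2k-\eps\ge n$ forces $2k+\eps-2<n$, placing the dual group in the free range and yielding $\dim H^{2n-2k-\eps}(G_n)=k$. Finally, as $x,y$ generate, this group is spanned by the monomials $m_i:=x^{2i-\eps}y^{n-k-i}$ (every monomial of this degree has $x$-exponent $\equiv\eps\bmod2$), with $i$ ranging so that both exponents are nonnegative.

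I would then recast the second assertion as a statement about one $\zt$-matrix. Let $M$ have rows indexed by the admissible $i$ and columns $j=1,\dots,k$, with $M_{ij}=1$ exactly when $i+j$ is a power of $2$. Writing $R$ for the space of genuine relations $\{c:\sum_ic_im_i=0\text{ in }H^*(G_n)\}$ and $R_{\mathrm f}$ for the left null space of $M$, the whole assertion reduces to two facts: the inclusion $R\subseteq R_{\mathrm f}$ and $\operatorname{rank}_{\zt}M=k$, which together give $R=R_{\mathrm f}$ (the dimensions agree, since $\dim R=\#\{i\}-k$ because the $m_i$ span a $k$-dimensional group). Granting $R=R_{\mathrm f}$ and full rank, one selects rows on which $M$ has an invertible $k\times k$ block and \emph{defines} $\b_1,\dots,\b_k$ by inverting that block against the corresponding $m_i$; full rank makes the $\b_j$ independent, hence a basis, and for every remaining row the equality $R=R_{\mathrm f}$ delivers precisely $m_i=\sum_jM_{ij}\b_j=\sum_{i+j\text{ a }2\text{-power}}\b_j$.

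The cohomological content is the inclusion $R\subseteq R_{\mathrm f}$. Every genuine relation is a combination of $x^ay^b\wbar_n$ and $x^ay^b\wbar_{n+1}$ of the appropriate degrees, so by (\ref{wbform}) its coefficient vector is assembled from the Lucas (disjoint-support) indicator; the task is to check that each such vector, evaluated against every column of $M$, vanishes, i.e. that the disjoint-support data is orthogonal to the $2$-power incidence pattern. I would organize this as a downward induction from Feder's top case \cite[Cor 4.1]{Fed}, which is exactly $k=1$: multiplication by $y$ carries $m_i$ in degree $2n-2k-\eps$ to $m_i$ in degree $2n-2(k-1)-\eps$ with the \emph{same} index and is onto, with one-dimensional kernel, so the inductive pattern is reproduced except for the single new class spanning $\ker(\cdot y)$, whose contribution to each $m_i$ is computed directly from (\ref{wbform}).

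The main obstacle is exactly this last computation, together with the rank statement. One must show that the mod-$2$ data produced by $\wbar_n,\wbar_{n+1}$ collapses onto the clean condition ``$i+j$ is a power of $2$'', equivalently that $\ker(\cdot y)$ enters $m_i$ precisely when $i+k$ is a power of $2$; as in the proof of Theorem \ref{main}, the decisive tool is Lucas's Theorem, here in the guise of the factorization behind (\ref{wbform}). The rank claim $\operatorname{rank}M=k$ is an elementary companion that is delicate only near the bottom of the range $d=n$, where the hypothesis $n\ge2k+\eps$ is exactly what guarantees enough powers of $2$ among the admissible indices $i$ to force the columns independent. Once these two combinatorial matchings are in place, $R=R_{\mathrm f}$ follows and the theorem is proved.
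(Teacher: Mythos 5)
Your structural reduction is sound, and in fact close in spirit to the paper's argument: the dimension count $\dim H^{2n-2k-\eps}(G_n)=k$ via Poincar\'e duality, the parity observation on $x$-exponents, and the linear-algebra reformulation ($R\subseteq R_{\mathrm f}$ together with $\operatorname{rank}M=k$ forces $R=R_{\mathrm f}$, after which the $\b_j$ can be defined by inverting a block) are all correct. But the proposal stops exactly where the content begins, and you say so yourself: neither of the two ``combinatorial matchings'' is proved, and neither is routine. The rank claim is precisely the invertibility of the $k\times k$ block of $M$ on rows $i=1,\dots,k$ (admissible, since the hypothesis gives $n\ge 2k+\eps$); this is the paper's Lemma~\ref{matlem}, $\det A_m=1$, proved there by a nonobvious inductive row-and-column reduction, and your remark that it is ``delicate only near the bottom of the range'' is not an argument (nor is the delicacy located correctly: the block sits in rows $1,\dots,k$ uniformly). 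More seriously, your induction on $k$ using $\cdot y$ alone only yields the orthogonality of a genuine relation to columns $1,\dots,k-1$ (push the relation forward by $y$, which preserves the index $i$, and apply the inductive hypothesis); the new column $j=k$ is a genuinely new fact, which you defer to a ``direct computation from (\ref{wbform}).'' Unwound, that computation asks that for every $x^ay^b\,\overline{w}_n$ and $x^ay^b\,\overline{w}_{n+1}$ the Lucas coefficient vector be orthogonal to the $2$-power indicator pattern --- a hard binomial-sum identity you never carry out. As written, the proof has a real gap at both load-bearing points.

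The paper's device shows how cheaply the column-$k$ obstacle can be removed, and it is exactly the idea your scheme is missing: run the induction with the two-component map $\phi=(\cdot y,\cdot x^2):V_{k+1}\to V_k\times V_k$ rather than $\cdot y$ alone. Multiplying a stage-$(k+1)$ relation $\sum_i c_i\,x^{2i-\eps}y^{n-(k+1)-i}=0$ by $y$ gives, by induction, orthogonality to columns $1,\dots,k$; multiplying by $x^2$ shifts $m_i\mapsto m_{i+1}$ and gives orthogonality to the shifted conditions, i.e.\ columns $2,\dots,k+1$ --- so the new column comes for free from the inductive hypothesis, with no computation in $\overline{w}_n,\overline{w}_{n+1}$ whatsoever. (The paper packages this as $\phi(x^{2i}y^{n-k-i-1})=\sum_{i+j\in P}\g_j$ with $\g_j=(\b_j,\b_{j-1})$, and then all remaining content --- surjectivity of $\phi$ onto the span of the $\g_j$, hence your rank claim as well --- is concentrated in the single determinant Lemma~\ref{matlem}.) Note also that your subsidiary assertion that $\cdot y$ is onto with one-dimensional kernel is itself left unjustified, though it is true and not needed in the repaired argument. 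To complete your proof you should adopt the $(\cdot y,\cdot x^2)$ trick and then actually prove $\det A_k=1$; your $\cdot y$-only induction discards the index-shift information that makes the whole argument close up.
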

\begin{proof} That the first relation occurs in grading $n$ is well-known (e.g., \cite[Prop 4.1]{Fed}).
The case $k=1$, $\eps=0$ is the result of \cite[Cor 4.1]{Fed} cited above. Multiplication by $x$ is an isomorphism $H^{2n-3}(G_n)\to H^{2n-2}(G_n)$ of groups  of order 2, implying the result when $k=1$ and $\eps=1$. We will prove the result by induction on $k$ when $\eps=0$. The induction when $\eps=1$ is identical.

Let $V_k=H^{2n-2k}(G_n)$, a vector space of dimension $k$ by Poincar\'e duality. Assume the result for $k$. Define
$$\phi=(\cdot y,\cdot x^2):V_{k+1}\to V_k\times V_k.$$
In $V_k\times V_k$, let
$$\g_1=(\b_1,0), \ \g_2=(\b_2,\b_1),\ldots,\g_k=(\b_k,\b_{k-1}),\ \g_{k+1}=(0,\b_k).$$
By the induction hypothesis,
$$\phi(x^{2i}y^{n-k-i-1})=\sum_{i+j\in P}\g_j,$$
where $P=\{1,2,4,\ldots\}$ denotes the set of 2-powers.

Let $W$ be the subspace of $V_k\times V_k$ spanned by $\g_1,\ldots,\g_{k+1}$. We will show that $\phi$ maps onto $W$. Then since $\dim(V_{k+1})=\dim(W)$, $\phi$ is injective. Let $\b_j=\phi^{-1}(\g_j)$. Then $\{\b_1,\ldots,\b_{k+1}\}$ is a basis for $V_{k+1}$, and
$$ x^{2i}y^{n-k-i-1}=\sum_{i+j\in P}\b_j,$$
extending the induction and completing the proof, once we establish the surjectivity of $\phi$ onto $W$.

Let $n=2m+\delta$ with $\delta\in\{0,1\}$. We first consider the case $k+1=m$. Letting $b_i=x^{2i}y^{m+\delta-i}\in V_{k+1}$ for $1\le i\le m$ (ignoring 1 or 2 monomials not required for the surjectivity), the matrix of $\phi$ with respect to the bases $\{b_1,\ldots,b_m\}$ and $\{\g_1,\ldots,\g_m\}$ is that of Lemma \ref{matlem}, and so $\phi$ is surjective. The cases of smaller values of $k$ have larger domain and smaller codomain, with $\phi$ being an extension of a quotient of the case $k+1=m$, and hence is surjective since the case $k+1=m$ was.
\end{proof}
\begin{lem}\label{matlem} Let $A_m$ denote the $m$-by-$m$ matrix over $\zt$ with $$a_{i,j}=\begin{cases}1&\text{if $i+j$ is a $2$-power}\\ 0&\text{if not.}\end{cases}$$
Then $\det(A_m)=1$.\end{lem}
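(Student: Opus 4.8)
The plan is to interpret the determinant combinatorially and then repeatedly peel off forced rows and columns, setting up an induction on $m$.

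First I would pass to the permanent. Over $\zt$ the sign is trivial, so $\det(A_m)$ equals the parity of the number of permutations $\sigma$ of $\{1,\dots,m\}$ such that $i+\sigma(i)$ is a $2$-power for every $i$. It therefore suffices to show this count is odd. The key structural observation is the following. Let $2^s$ be the largest power of $2$ with $2^s\le 2m$, so that $2^{s-1}\le m<2^s$. For any row index $i$ with $2^{s-1}<i\le m$, the sum $i+\sigma(i)$ satisfies $2^{s-1}<i+\sigma(i)\le 2m<2^{s+1}$, so the only available $2$-power is $2^s$ itself; hence $\sigma(i)=2^s-i$ is forced, and one checks that this value does lie in $\{1,\dots,m\}$. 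By the symmetry $a_{i,j}=a_{j,i}$, the identical argument forces the preimage of every column $j$ with $2^{s-1}<j\le m$ to be $2^s-j$.

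Next I would carry out the bookkeeping for the reduction. Deleting the forced high rows together with their image columns $\{2^s-m,\dots,2^{s-1}-1\}$, and then the high columns together with their forced source rows (which form the same set $\{2^s-m,\dots,2^{s-1}-1\}$), leaves a square submatrix whose common row and column index set is $S=\{1,\dots,2^s-m-1\}\cup\{2^{s-1}\}$. On $S$ one verifies that the index $2^{s-1}$ decouples: position $(2^{s-1},2^{s-1})$ gives $i+j=2^s$, a $2$-power, while for $j\le 2^s-m-1$ the sum $2^{s-1}+j$ lies strictly between $2^{s-1}$ and $2^s$ and so contributes $0$. Thus that row and column are a standard basis vector, and the complementary block indexed by $\{1,\dots,2^s-m-1\}$ is by definition $A_p$ with $p=2^s-m-1$. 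The surviving matrix is block-diagonal $A_p\oplus(1)$, whence $\det(A_m)=\det(A_p)$. Since $2^s\le 2m$ gives $p=2^s-m-1<m$ (and $p\ge 0$), this is a strict reduction, and induction with base case the empty (or $1\times1$) matrix, where the determinant is $1$, completes the proof.

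The main obstacle is precisely the index bookkeeping in the reduction step. One must check carefully that the column set forced by the high rows and the row set forced by the high columns are the \emph{same} set $\{2^s-m,\dots,2^{s-1}-1\}$, so that after both deletions the remaining row and column index sets coincide and the leftover is genuinely $A_p\oplus(1)$ rather than an off-diagonal submatrix with a different determinant. Confirming that $2^{s-1}$ survives both deletions and is isolated, and that the low block is exactly $A_p$, is where the inequalities $2^{s-1}\le m<2^s$ must be used with care; everything else is routine once the forcing is established.
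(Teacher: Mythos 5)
Your proof is correct and takes essentially the same route as the paper: the paper writes $m=2^e+\Delta$, notes that each row $2^e+i$ ($0\le i\le\Delta$) contains a single $1$ (in column $2^e-i$), and reduces by row and column operations to $A_{2^e-\Delta-1}$ plus an antidiagonal block, which is exactly your reduction $\det(A_m)=\det(A_p)$ with $p=2^s-m-1$ (here $2^s=2^{e+1}$). Your permanent/forced-permutation language, including isolating the diagonal index $2^{s-1}$, is just different bookkeeping for the same induction, and your index verifications are sound.
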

\begin{proof} The proof is by induction on $m$. Let $m=2^e+\Delta$ with $0\le\Delta<2^e$. For $0\le i\le\Delta$, row $2^e+i$ contains a single 1, in column $2^e-i$. Subtract this row from other rows which have a 1 in column $2^e-i$. Then do a similar thing with columns $2^e+j$, $0\le j\le\Delta$. The result has $A_{2^e-\Delta-1}$ in the top left, and a $(2\Delta+1)$-by-$(2\Delta+1)$ matrix with 1's along the antidiagonal in the bottom right. All other elements are 0. By the induction hypothesis, this matrix has determinant 1.\end{proof}

In moderately large gradings, there is, for each $j$, a monomial $x^iy^\ell$ equal to $\b_j$. For example, in $H^{24}(G_{20})$, the following monomials equal $\b_1,\ldots,\b_8$, respectively:
$$x^{14}y^5,\ x^{12}y^6,\ x^{10}y^7,\ x^{24},\  x^{22}y,\ x^{20}y^2,\ x^{18}y^3,\ x^{16}y^4,$$
and a similar pattern holds in $H^i(G_{20})$ for $23\le i\le 38$. However, in $H^{22}(G_{20})$, $x^{14}y^4=\b_1+\b_9$, and there is no monomial which equals either $\b_1$ or $\b_9$. We can obtain $\b_1$ as $x^{22}+x^6y^8$, since $x^{22}=\b_5$ and $x^6y^8=\b_1+\b_5$.

\section{Topological complexity of $C(P^n,2)$}\label{TCsec}
The topological complexity $\TC(X)$ of a topological space $X$ is a homotopy invariant introduced by Farber in \cite{Far1} which is one less than the number of nice subsets $U_i$ into which $X\times X$ can be partitioned such that there is a continuous map $s_i:U_i\to X^I$ such that $s_i(x_0,x_1)$ is a path from $x_0$ to $x_1$. This is of interest (\cite{Far2}) for ordered (resp.~unordered) configuration spaces $F(X,n)$ (resp.~$C(X,n)$) as it measures how efficiently $n$ distinguishable (resp.~indistinguishable)  robots can be moved from one set of points in $X$ to another. The determination of $\TC(C(X,n))$ has been particularly difficult.(\cite{Sch},\cite{BR})

Farber showed (\cite{Far1}) that $\zcl(X)\le\TC(X)\le2\dim(X)$ if $X$ is a CW complex. Here $\zcl(X)$, the zero-divisor-cup-length, is the largest number of elements of $\ker(\Delta^*:\widetilde H^*(X\times X)\to\widetilde H^*(X))$ with nonzero product, where $\Delta$ is the diagonal map. The main theorem of this section determines $\zcl(C(P^n,2))$.
\begin{thm} If $0\le d<2^e$ and $r=\max\{s\in\Z:2^s\le d+\frac12\}$, then
$$\zcl(C(P^{2^e+d},2))=2^{e+2}+2^{r+1}-4$$
and $\TC(C(P^{2^e+d},2))\ge 2^{e+2}+2^{r+1}-4$.
\end{thm}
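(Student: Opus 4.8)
The plan is to compute $\zcl(C(P^n,2))$ from the cohomology algebra $H^*(C_n)$ established in Section~1, writing $n=2^e+d$ with $0\le d<2^e$. Recall from (\ref{HW}) and the discussion preceding it that $H^*(C_n)\approx H^*(G_n)\oplus uH^*(G_n)$ as a module, with the multiplicative relation $u^2=xu$ and the structure of $H^*(G_n)$ given by Theorem~\ref{HG}. Since $\zcl(X)$ is computed from $\ker(\Delta^*:\widetilde H^*(X\times X)\to\widetilde H^*(X))$, the first step is to identify explicit zero-divisors: for each generator $z\in\{x,y,u\}$, the element $\bar z=z\ot 1+1\ot z\in H^*(C_n\times C_n)$ lies in the kernel of $\Delta^*$, since $\Delta^*(z\ot1+1\ot z)=z+z=0$ over $\zt$. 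The strategy is to form a product of such $\bar z$'s of total degree $2^{e+2}+2^{r+1}-4$ and show it is nonzero in $H^*(C_n\times C_n)\approx H^*(C_n)\ot H^*(C_n)$, giving the lower bound; then show no longer product of zero-divisors survives, giving the matching upper bound. The final inequality $\TC\ge 2^{e+2}+2^{r+1}-4$ is then immediate from Farber's inequality $\zcl(X)\le\TC(X)$ quoted in the text.

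First I would pin down the top nonzero cohomology and the dimension of $C_n$. The manifold $C_n$ is a noncompact $2n$-manifold, and from the proof of Theorem~\ref{main} the top nonzero group is $H^{2n-1}(C_n)=\zt$, spanned by $u^{2^t-1}y^{n-2^{t-1}}$ for the appropriate $t$; below that, Poincar\'e-duality-type counts from Theorem~\ref{HG} give the Betti numbers of $H^*(G_n)$ in each grading, doubled by the $u$-summand. The key computation is to expand a product $\bar x^{\,a}\bar y^{\,b}\bar u^{\,c}$ using the binomial theorem over $\zt$, collecting the coefficient of the top class $u^{2^t-1}y^{n-2^{t-1}}\ot u^{2^{t}-1}y^{n-2^{t-1}}$ (or the appropriate top$\ot$top monomial in $H^*(C_n)\ot H^*(C_n)$). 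The products $x^iy^j$ and their $u$-analogues must be reduced using Theorem~\ref{HG}, so the surviving coefficient is governed by sums of binomial coefficients mod~2, evaluated by Lucas's Theorem exactly as in Section~1. The number $2^{e+2}+2^{r+1}-4$ should emerge as twice the top degree $2n-1$ of a single factor, corrected by the largest ``reachable'' power tied to $r=\max\{s:2^s\le d+\tfrac12\}$, which records how far the $2$-power pattern of nonvanishing monomials in Theorem~\ref{HG} extends for this particular $d$.

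The main obstacle will be the upper bound: showing that \emph{every} product of more than $2^{e+2}+2^{r+1}-4$ zero-divisors vanishes. The lower bound only requires exhibiting one surviving product, but the upper bound is a statement about all products, and $\zcl$ can in principle exceed the length of products of the primitive zero-divisors $\bar x,\bar y,\bar u$ if general kernel elements (not of product form) combine more efficiently. The standard way to control this is to bound $\zcl(X)$ above by the \emph{nilpotency} of the ideal of zero-divisors, or more concretely to observe that any zero-divisor, written in the $H^*(C_n)\ot H^*(C_n)$ decomposition, has its leading behavior governed by the same $2$-power constraints from Theorem~\ref{HG} and Lemma~\ref{matlem}; a product of too many of them is forced into a grading or a monomial pattern where Lucas's Theorem kills every coefficient. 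I expect this to require a careful grading/weight argument: assign to each monomial a weight reflecting its expansion in the $\b_j$-basis of Theorem~\ref{HG}, and show the total weight of a nonzero product cannot exceed the target. Verifying that the extremal product is simultaneously nonzero (lower bound) and maximal (upper bound) is where the delicate mod-$2$ binomial analysis, and the precise definition of $r$, will do the real work.
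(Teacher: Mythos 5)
Your skeleton coincides with the paper's: both bounds come from products of the primitive zero-divisors $\xbar,\ubar,\ybar$, reduced via (\ref{HW}) and Theorem \ref{HG} and evaluated by mod-2 binomial arithmetic. But the proposal stops at the skeleton, and the one detection step you actually commit to would fail. You propose reading off the coefficient of the symmetric top class $u^{2^t-1}y^{n-2^{t-1}}\ot u^{2^t-1}y^{n-2^{t-1}}$. The extremal product $\xbar^{2^{e+1}-1}\,\ubar^{2^{e+1}-2}\,\ybar^{2^{r+1}-1}$ of (\ref{32}) has length $2^{e+2}+2^{r+1}-4$ but total degree $2^{e+2}+2^{r+2}-5$, which is strictly less than $2(2n-1)=2^{e+2}+4d-2$ precisely because $2^r\le d+\frac12$; so it never lies in bidegree $(\mathrm{top},\mathrm{top})$, and symmetric detection is doomed over $\zt$ in any case --- the paper observes that raising the $\ubar$-exponent by one makes the candidate term $x^{2^{e+1}-2}u\ot x^{2^{e+1}-2}u$ appear with \emph{even} coefficient by symmetry. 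The actual detection is asymmetric: after discarding terms whose right tensor factor is divisible by $u$, the product collapses to the single term $x^{2^{e+1}-2}uy^d\ot x^{2^{e+1}-2}y^{2^{r+1}-d}$, nonzero by Theorem \ref{HG}. Nothing in your outline produces this choice of exponents or explains how $r$ enters; ``the number should emerge'' is exactly where the proof lives.

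On the upper bound, your worry about zero-divisors not of product form is legitimate but has a standard resolution you should state rather than gesture at: with field coefficients, $\ker\Delta^*$ is generated as an ideal by $\xbar,\ybar,\ubar$ (apply the derivation identity $\overline{ab}=\abar(b\ot1)+(1\ot a)\bbar$ to the algebra generators), so it suffices to kill all long products of these three; your nilpotency/weight scheme is a placeholder for this reduction, not a substitute for the content that follows it. What actually makes the upper bound close, and what your outline lacks entirely, are two concrete facts: (i) any nonzero monomial $x^su^\delta y^t$ in $H^*(C_n)$ with $t>d$ forces $s\le2^e-2$, a short 2-power computation from Theorem \ref{HG}; and (ii) Lemma \ref{comblem}, that $\tbinom{d+j}d$ is even for $2^{r+1}-d-1<j\le d-1$, which together with a dimension count against $2\dim W_n$ eliminates every monomial of larger exponent sum in $\xbar^a\ubar^b\ybar^c$. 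Two further slips to repair: $\zcl$ counts the number of zero-divisor factors, not total cohomological degree (since $|\ybar|=2$ these differ here), and the Poincar\'e duality you invoke lives on $W_n\simeq C_n$, a closed $(2n-1)$-manifold, not on the open $2n$-manifold $C_n$ itself.
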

Since $C(P^n,2)$ has the homotopy type of the compact $(2n-1)$-manifold $W_n$ described in the proof of Theorem \ref{main}, $\TC(C(P^{2^e+d},2))\le 2^{e+2}+4d-2$. For $d=0,1,2,3,4$, the gap between our upper and lower bounds for $\TC(C(P^{2^e+d},2))$ is $1,4,6,10,10$, respectively.
\begin{proof} Let $n=2^e+d$ and let $C_n$, $W_n$, and $G_n$ be as in the proof of Theorem \ref{main}. We identify $H^*(C_n)$ with $H^*(W_n)$ and note that the impact of (\ref{HW}) is that $x^iu^j=x^{i+j-1}u$ if  $j>0$.

Let $\xbar=x\ot 1+1\ot x$, and define $\ybar$ and $\ubar$ similarly.
We claim that $\zcl(C_n)\ge 2^{e+2}+2^{r+1}-4$ since
\begin{equation}\xbar^{2^{e+1}-1}\,\ubar^{2^{e+1}-2}\,\ybar^{2^{r+1}-1}\ne0.\label{32}\end{equation}
To see this, we first note that the indicated product is, in bigrading $(2^{e+1}+2d-1,2^{e+1}+2^{r+2}-2d-4)$, equal to
$$\sum_{k,j}x^{2k-1}u^{2^{e+1}+2(d-j-k)}y^j\ot x^{2^{e+1}-2k}u^{2(j+k-d-1)}y^{2^{r+1}-1-j}.$$
Since the terms divisible by $u$ are independent from those not divisible by $u$, we restrict to terms whose right factor is not divisible by $u$, and obtain
\begin{equation}\label{num}\sum_jx^{2^{e+1}+2(d-j)-2}uy^j\ot x^{2^{e+1}-2(d-j+1)}y^{2^{r+1}-1-j}.\end{equation}
Terms with $j<d$ (resp.~$j>d$) have left (resp.~right) factor equal to 0 since $x^{2^{e+1}}=0$. Thus (\ref{num}) equals $x^{2^{e+1}-2}uy^d\ot x^{2^{e+1}-2}y^{2^{r+1}-d}$, which is nonzero by (\ref{HW}) and Theorem \ref{HG}.

To see that this bound for zcl cannot be improved, first note that the exponents of $\xbar$ and $\ubar$ in (\ref{32}) cannot be increased since $x^{2^{e+1}-1}=0$. If the exponent of $\ubar$ is increased by 1, the top term $x^{2^{e+1}-2}u\ot x^{2^{e+1}-2}u$ occurs with even coefficient  by symmetry.  The only hope of getting a larger nonzero product would be to increase the exponent of $\ybar$. We will use our analysis of $H^*(C_n)$ to see that this will fail to improve the zcl.

The key observation is that, with $n=2^e+d$ and $\delta\in\{0,1\}$, a nonzero monomial $x^su^\delta y^t$ in $H^*(C_n)$ with $t>d$ must have $s\le 2^e-2$. This will follow from Theorem \ref{HG} once we show that if $x^sy^t=x^{2i-\eps}y^{n-k-i}$ has $s\ge2^e-1$ and $t\ge d+1$, and $2\le 2j\le 2k$, then $2i+2j$ is not a 2-power. We have $2i+2j\ge2^e-1+\eps+2>2^e$. On the other hand, $2i+2j\le(2n-2k-2d-2)+2k=2^{e+1}-2$.

If $x^{i_1}u^{\eps_1}y^{j_1}\ot x^{i_2}u^{\eps_2}y^{j_2}$ appears in the expansion of $\xbar^a\,\ubar^b\,\ybar^c$ with maximal exponent sum, it should have $i_1=2^{e+1}-2$, $\eps_1=1$, and $j_1=d$, as we do not want to sacrifice $2^e$ $x$-exponents on both sides of the $\ot$. To have a monomial $x^{2^{e+1}-2}uy^d\ot x^{i_2}u^{\eps_2}y^{j_2}$ whose exponent sum exceeds our zcl bound would require $i_2+j_2+\eps_2>2^{e+1}-3+2^{r+1}-d$. If $j_2>d$, then $i_2\le2^e-2$, so we would need $j_2+\eps_2\ge2^e+2^{r+1}-d$ with strict inequality unless $i_2=2^e-2$. We also have $j_2\le2^e+d-1$, half the dimension of $W_n$. We would also need $\binom{d+j_2}d\equiv1$ mod 2. But this is impossible by Lemma \ref{comblem} unless $i_2=2^e-2$ and $j_2=2^e+2^{r+1}-d-1$. But then $|x^{2^{e+1}-2}uy^d\ot x^{i_2}u^{\eps_2}y^{j_2}|>2\dim(W_n)$.
The alternative is $j_2\le d$. But, since we need $\binom{d+j_2}d\equiv1$ mod 2, the largest such $j_2$ was what was used in obtaining our lower bound.
\end{proof}
\begin{lem}\label{comblem} If $2^r\le d<2^{r+1}$ and $2^{r+1}-d-1<j\le d-1$, then $\binom{d+j}d\equiv0\ (2)$.\end{lem}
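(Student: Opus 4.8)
The plan is to reduce the divisibility statement to a disjointness condition on binary digits and then argue by a size comparison of bit-masks. By Lucas's Theorem (equivalently Kummer's Theorem, already the tool invoked elsewhere in the paper), $\binom{d+j}{d}$ is odd if and only if the base-$2$ addition $d+j$ produces no carries, i.e.\ the binary expansions of $d$ and $j$ have disjoint support. So I would prove the equivalent assertion that, under the stated hypotheses, $d$ and $j$ always share at least one binary digit; the oddness criterion then fails, giving $\binom{d+j}{d}\equiv0\ (2)$.

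First I would isolate the leading bit of $d$. Writing $d=2^r+d'$ with $0\le d'<2^r$, the hypothesis $2^r\le d<2^{r+1}$ says that bit $r$ is the top bit of $d$, while $j\le d-1<2^{r+1}$ forces $j$ to carry no bit above position $r$. I would then split on whether bit $r$ of $j$ is set: if it is, then $d$ and $j$ share bit $r$ and the binomial is even immediately. This reduces everything to the case $j<2^r$, where the lower bound $2^{r+1}-d-1<j$ rewrites as $2^r-d'\le j$, so that $2^r-d'\le j\le 2^r-1$ (and in particular $d'\ge1$, the value $d'=0$ leaving the range empty).

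The crux is this last case, and the clean step I would use is to pass to the $r$-bit complement $\overline{j}=(2^r-1)-j$, which ranges over $[0,d'-1]$ as $j$ runs over $[2^r-d',2^r-1]$. Because both $d$ and $j$ lie below $2^{r+1}$, disjointness of their supports is the same as disjointness of the supports of $d'$ and $j$, i.e.\ the statement that $d'$ is a submask of $\overline{j}$. But a submask can never exceed its supermask, so this would force $d'\le\overline{j}\le d'-1$, a contradiction. Hence $d$ and $j$ must share a digit below position $r$. The only real bookkeeping obstacle is confirming that the hypothesis $2^{r+1}-d-1<j$ is exactly calibrated to give $\overline{j}\le d'-1$; once that is in place, the submask-size monotonicity finishes the argument at once.
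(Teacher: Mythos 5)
Your proposal is correct and is essentially the paper's argument: both reduce via Lucas's Theorem to the disjointness of the binary supports of $d$ and $j$, then derive a contradiction from the submask-implies-smaller principle applied to a complement mask. The paper complements $d$ directly (the ones of $j$ must lie among those of $2^{r+1}-1-d$, forcing $j\le 2^{r+1}-1-d$, contradicting the lower bound), which makes your top-bit case split and the passage to $\overline{j}=(2^r-1)-j$ unnecessary, but your version is a valid rephrasing of the same idea.
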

\begin{proof} For $\binom{d+j}d$ to be odd, the binary expansions of $j$ and $d$ must be disjoint. Since $j\le2^{r+1}-1$, these 1's would have to be a subset of those of $2^{r+1}-1-d$.\end{proof}

 \def\line{\rule{.6in}{.6pt}}


\begin{thebibliography}{99}
\bibitem{BR} A.Bianchi and D.Recio-Mitter, {\em Topological complexity of unordered configuration spaces of surfaces}, to appear in Alg Geom Topology.
\bibitem{Borel} A.Borel, {\em La cohomologie mod 2 de certains spaces homogenes}, Comm Math Helv {\bf 27} (1953) 165--196.
\bibitem{Chern} S.S.Chern, {\em On the multiplication in the characteristic ring of a sphere bundle}, Annals of Math {\bf 49} (1948) 367--372.
\bibitem{Far1} M.Farber, {\em Topological complexity of motion planning}, Discrete Comput Geom {\bf 29} (2003) 211--221.
\bibitem{Far2} M.Farber and M.Grant, {\em Topological complexity of configuration spaces}, Proc Amer Math Soc {\bf 137} (2009) 1841--1847.
\bibitem{Fed} S.Feder, {\em The reduced symmetric product of projective spaces and the generalized Whitney theorem}, Ill Jour Math {\bf 16} (1972) 323--329.
\bibitem{Gan} D.Y.Gan, {\em On a problem of Whitney}, Chinese Ann Math Ser B {\bf 12} (1991) 230--234.
\bibitem{GM} S.Gitler and M.Mahowald, {\em The geometric dimension of real stable vector bundles}, Bol Soc Mat Mex {\bf 11} (1966) 85--107.
\bibitem{Han} D.Handel, {\em An embedding theorem for real projective spaces}, Topology {\bf 7} (1968) 125--130.
\bibitem{Har} M.Harrison, {\em Introducing totally nonparallel immersions}, preprint.
\bibitem{Hir} M.Hirsch, {\em Immersions of manifolds}, Trans Amer Math Soc {\bf 93} (1959) 242--276.
\bibitem{MS} J.W.Milnor and J.D.Stasheff, {\em Characteristic classes}, Annals of Math Studies 76 (1974) Princeton Univ Press.
\bibitem{Monks} K.G.Monks, {\em Groebner bases and the cohomology of Grassmann manifolds with application to immersion}, Bol Soc Mat Mex {\bf 7} (2001) 123--136.
\bibitem{Nuss} F.Nussbaum, {\em Obstruction theory of possibly nonorientable fibrations}, Northwestern University PhD thesis (1970).
\bibitem{Opr} V.Oproiu, {\em Some non-embedding theorems for the Grassmann manifolds $G_{2,n}$ and $G_{3,n}$}, Proc Edinburgh Math Soc {\bf 20} (1976) 177--185.
\bibitem{Sch} S.Scheirer, {\em Topological complexity of $n$ points on a tree}, Alg Geom Topology {\bf 18} (2018) 839--876.
\bibitem{Wh} H.Whitney, {\em The selfintersections of a smooth $n$-manifold in $2n$-space}, Annals of Math {\bf 45} (1944) 220--246.
\end{thebibliography}
\end{document}